\theoremstyle{plain}
\newtheorem{teor}{Theorem}
\numberwithin{teor}{section}
\numberwithin{equation}{section}
\theoremstyle{definition}
\newaliascnt{defi}{teor}
\newtheorem{defi}[defi]{Definition}
\theoremstyle{plain}
\newaliascnt{lemma}{teor}%
\newtheorem{lemma}[lemma]{Lemma}
\theoremstyle{plain}
\newaliascnt{prop}{teor}%
\newtheorem{prop}[prop]{Proposition}
\theoremstyle{plain}
\newaliascnt{cor}{teor}%
\theoremstyle{definition}
\newaliascnt{ex}{teor}%
\theoremstyle{remark}
\newaliascnt{oss}{teor}%
\newtheorem{oss}[oss]{Remark}
\DeclarePairedDelimiter{\abs}{\lvert}{\rvert}
\DeclareMathOperator{\capac}{Cap}
\newcommand{\marcomm}[1]{\marginpar{\begin{flushright}#1\end{flushright}}}%
\newcommand{\R}{\mathbb{R}}
\newcommand{\Ln}{\mathcal{L}^n}
\newcommand{\Hn}{\mathcal{H}^{n-1}}
\newcommand{\test}{\frac{\abs{\nabla u}}{u}}
\newcommand{\testt}{\frac{\abs{\nabla u^*}}{u^*}}
\newcommand{\eps}{\varepsilon}
\DeclareMathOperator{\divv}{div}
\newcommand{\leqnomode}{\tagsleft@true\let\veqno\@@leqno}
\newcommand{\reqnomode}{\tagsleft@false\let\veqno\@@eqno}
\definecolor{myblue}{HTML}{427eba}
\definecolor{myred}{HTML}{cc5653}
\definecolor{myyellow}{HTML}{fa8b30}
\newcommand{\Addresses}{{%
 \bigskip 
 \footnotesize 
 
 \textsc{Dipartimento di Matematica e Applicazioni ``R. Caccioppoli'', Universit\`a degli studi di Napoli Federico II, Via Cintia, Complesso Universitario Monte S. Angelo, 80126 Napoli, Italy.}\par\nopagebreak 
 
 \medskip 
 
 \textit{E-mail address}, P.~Acampora: \texttt{paolo.acampora@unina.it} 
 \medskip 
 
\textsc{Mathematical and Physical Sciences for Advanced Materials and Technologies, Scuola Superiore Meridionale, Largo San Marcellino 10, 80126, Napoli, Italy.}\par\nopagebreak 
 
 \medskip 
 
 \textit{E-mail address}, E.~Cristoforoni: \texttt{emanuele.cristoforoni@unina.it} 
}} 
\title{An isoperimetric result for an energy related to the $p$-capacity}
\author{P. Acampora, E. Cristoforoni }
\date{}
\begin{document}
\reversemarginpar
\maketitle
\begin{abstract}
    In this paper, we generalize the notion of relative $p$-capacity of $K$ with respect to $\Omega$, by replacing the Dirichlet boundary condition with a Robin one. We show that, under volume constraints, our notion of $p$-capacity is minimal when $K$ and $\Omega$ are concentric balls. We use the $H$-function (see \cite{bossel, daners}) and a derearrangement technique.
    
 \textsc{MSC 2020:} 35J66, 35J92, 35R35.
 
\textsc{Keywords:} Robin, p-capacity, free boundary, isocapacitary inequality
\end{abstract}
\section{Introduction}
Let $p>1$, $\beta>0$ be real numbers. For every open bounded sets $\Omega\subset\R^n$ with Lipschitz boundary, and every compact set $K\subseteq \overline{\Omega}$ with Lipschitz boundary, we define
\begin{equation}
\label{problema0}
E_{\beta,p}(K,\Omega)=\inf_{\substack{v\in W^{1,p}(\Omega)\\ v=1 \text{ in } K}}\left(\int_\Omega \abs{\nabla v}^p\,dx+\beta\int_{\partial\Omega} \abs{v}^p\,d\Hn\right).
\end{equation}
We notice that it is sufficient to minimize among all functions $v\in H^1(\Omega)$ with $v=1$ in $K$ and $0\le v\le 1$ a.e., moreover if $K,\Omega$ are sufficiently smooth, a minimizer $u$ satisfies
\begin{equation}
\label{eq: pde}
\begin{cases}
u=1 &\text{in }K,\\[5 pt]
\Delta_p u=0 &\text{in }\Omega \setminus K,\\[6 pt]
\abs{\nabla u}^{p-2}\dfrac{\partial u}{\partial \nu}+\beta\abs{u}^{p-2}u=0 &\text{on }\partial \Omega\setminus \partial K,
\end{cases}
\end{equation}
where $\Delta_p u =\divv\left(\abs{\nabla u}^{p-2} \nabla u\right)$ is the $p$-Laplacian of $u$ and $\nu$ is the outer unit normal to $\partial\Omega$. If $\mathring{K}=\Omega$, equation \eqref{eq: pde} has to be intended as $u=1$ in $\Omega$, and the energy is 
\[
E_{\beta, p}(\Omega,\Omega) = \beta \Hn(\partial\Omega).
\]
In general, equation \eqref{eq: pde} has to be intended in the weak sense, that is: for every $\varphi\in W^{1,p}(\Omega)$ such that $\varphi\equiv 0$ in $K$, 
\begin{equation}
    \label{eq: EL}
    \int_\Omega \abs{\nabla u}^{p-2}\nabla u \nabla \varphi\, d\Ln + \beta\int_{\partial\Omega}u^{p-1}\varphi\,d\Hn= 0.
\end{equation}
In particular if $u$ is a minimizer, letting $\varphi=u-1$, we have that
\[E_{\beta,p}(K,\Omega)=\int_\Omega \abs{\nabla u}^p\,dx+\beta\int_{\partial\Omega} u^p\,d\Hn=\beta\int_{\partial\Omega} u^{p-1}\,d\Hn.\]
Moreover from the strict convexity of the functional, the minimizer is the unique solution to \eqref{eq: EL}.\medskip

This problem is related to the so-called \emph{relative $p$-capacity of $K$ with respect to $\Omega$}, defined as
\[
\capac_p(K,\Omega):=\inf_{\substack{v\in W^{1,p}_0(\Omega)\\ v=1 \text{ in } K}}\left(\int_\Omega \abs{\nabla v}^p\,dx\right).
\]
In the case $p=2$ it represents the electrostatic capacity of an annular condenser consisting of a conducting surface $\partial\Omega$, and a conductor $K$, where the electrostatic potential is prescribed to be 1 inside $K$ and 0 outside $\Omega$. 
Let $\omega_n$ be the measure of the unit sphere in $\R^n$, and let $M>\omega_n$, then it is well known that there exists some $r\ge1$ such that
\[
\min_{\substack{\abs{K}=\omega_n\\ \abs{\Omega}\le M}}\capac_p(K,\Omega)=\capac_p(B_1,B_r).
\]
This is an immediate consequence of the Pólya-Szegő inequality for the Schwarz rearrangement (see for instance \cite{polya, kesavan}). We are interested in studying the same problem for the energy defined in \eqref{problema0}, which corresponds to changing the Dirichlet boundary condition on $\partial\Omega$ into a Robin boundary condition, namely, we consider the following problem
\begin{equation}\label{problema} \inf_{\substack{\abs{K}=\omega_n\\ \abs{\Omega}\le M}} E_{\beta,p}(K,\Omega).\end{equation}
In this case, the previous symmetrization techniques cannot be employed anymore.\medskip

Problem \eqref{problema} has been studied in the linear case $p=2$ in \cite{nahon}, with more general boundary conditions on $\partial\Omega$, namely
\[
\frac{\partial u}{\partial \nu}+\frac{1}{2}\Theta'(u)=0,
\]
where $\Theta$ is a suitable increasing function vanishing at $0$. This problem has been addressed to thermal insulation (see for instance \cite{CK, AC}). Our main result reads as follows.

\begin{teor}\label{teorema}
Let $\beta>0$ such that
\[
\beta^{\frac{1}{p-1}}>\frac{n-p}{p-1}.
\]
Then, for every $M>\omega_n$ the solution to problem \eqref{problema} is given by two concentric balls $(B_1,B_r)$, that is 
\[
\min_{\substack{\abs{K}=\omega_n\\ \abs{\Omega}\le M}} E_{\beta,p}(K,\Omega) = E_{\beta, p}(B_1, B_r),
\]
in particular we have that either $r=1$ or $M=\omega_n r^n$. 

Moreover, if $K_0\subseteq\overline{\Omega_0}$ are such that 
\[
E_{\beta,p}(K_0,\Omega_0)=\min_{\substack{\abs{K}=\omega_n\\ \abs{\Omega}\le M}} E_{\beta,p}(K,\Omega),
\]
and $u$ is the minimizer of $E_{\beta,p}(K_0,\Omega_0)$, then the sets $\Set{u=1}$ and $\Set{u>0}$ coincide with two concentric balls up to a $\Hn$-negligible set.
\end{teor}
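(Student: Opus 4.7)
My plan is to combine the Bossel--Daners $H$-function technique (originally developed for the Robin eigenvalue problem) with a derearrangement comparison against an explicit radial candidate on two concentric balls. First I establish the existence of an optimal pair $(K_0,\Omega_0)$ via the direct method, with compactness guaranteed by restricting to a uniform Lipschitz class or relaxing to sets of finite perimeter. The associated minimizer $u\in W^{1,p}(\Omega_0)$ solves \eqref{eq: pde}, satisfies $u=1$ on $K_0$ and $0<u\le 1$ on $\overline{\Omega_0}$ by the strong maximum principle, and is $p$-harmonic on $\Omega_0\setminus K_0$; moreover $E := E_{\beta,p}(K_0,\Omega_0)=\beta\int_{\partial\Omega_0} u^{p-1}\,d\Hn$.

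For a.e.\ $t\in(0,1)$ I introduce $\Omega_t=\{u>t\}$, $\gamma_t=\partial\Omega_t\cap\Omega_0$, $\Gamma_t=\partial\Omega_t\cap\partial\Omega_0$, $\mu(t)=\abs{\Omega_t}$, and $m(t)=\int_{\Gamma_t} u^{p-1}\,d\Hn$. The divergence theorem applied to $\abs{\nabla u}^{p-2}\nabla u$ on $\Omega_t\setminus K_0$, combined with the Robin condition on $\Gamma_t$, yields the key identity
\[
\int_{\gamma_t} \abs{\nabla u}^{p-1}\,d\Hn = E - \beta\, m(t).
\]
Using the coarea formula $-\mu'(t)=\int_{\gamma_t}\abs{\nabla u}^{-1}\,d\Hn$ together with H\"older's inequality and the Euclidean isoperimetric inequality $\Hn(\gamma_t)+\Hn(\Gamma_t)=P(\Omega_t)\ge n\omega_n^{1/n}\mu(t)^{(n-1)/n}$, I obtain the differential inequality
\[
\bigl(n\omega_n^{1/n}\mu(t)^{(n-1)/n}-\Hn(\Gamma_t)\bigr)_+^{p}\le (-\mu'(t))^{p-1}\bigl(E-\beta\, m(t)\bigr).
\]

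The derearrangement step compares this inequality against the corresponding radial identity. On the candidate pair $(B_1,B_r)$ the minimizer $u^*$ is explicit, radial and strictly decreasing in $\rho$: the hypothesis $\beta^{1/(p-1)}>(n-p)/(p-1)$ is exactly what is needed so that the Robin condition at $\partial B_r$ is compatible with a positive, monotone radial profile. For $u^*$ all the inequalities above become equalities, and $(\mu^*,m^*)$ satisfies an explicit ODE system matched to the constraints ($\mu^*(1^-)=\omega_n$ and either $\mu^*(0^+)=\omega_n r^n=M$ or the degenerate case $r=1$). A comparison principle for this ODE system applied to $(\mu,m)$ against $(\mu^*,m^*)$ yields $E\ge E_{\beta,p}(B_1,B_r)$. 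The rigidity statement then follows by propagating equality backwards through the isoperimetric step, which forces each superlevel set $\Omega_t$ to be a ball and $\Gamma_t$ to coincide with the full boundary sphere (or be empty); hence $\{u>0\}$ and $\{u=1\}$ are concentric balls up to $\Hn$-negligible sets.

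The main obstacle is the simultaneous presence of the Dirichlet-type constraint $u=1$ on $K_0$ and the Robin condition on $\partial\Omega_0$: because the level sets $\Omega_t$ interpolate between $K_0$ (as $t\to 1^-$) and $\Omega_0$ (as $t\to 0^+$), both $\gamma_t$ and $\Gamma_t$ may be simultaneously nontrivial, so the isoperimetric deficit $n\omega_n^{1/n}\mu(t)^{(n-1)/n}-\Hn(\Gamma_t)$ must be tracked carefully along the entire foliation. Setting up the correct coupled ODE comparison for $(\mu,m)$, and verifying that the threshold $\beta^{1/(p-1)}>(n-p)/(p-1)$ enforces the right monotonicity direction, is where the real work lies.
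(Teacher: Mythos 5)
Your opening reduction is fine as far as it goes: the divergence-theorem identity $\int_{\gamma_t}\abs{\nabla u}^{p-1}\,d\Hn=E-\beta m(t)$ and the H\"older/coarea/isoperimetric step are correct (they are essentially the content of \autoref{lemma1} in a different normalization). The genuine gap is the step you describe as ``a comparison principle for this ODE system'': no such system is actually set up, and as stated it cannot be closed. Your differential inequality involves three unknowns, $\mu(t)$, $m(t)$ and $\Hn(\Gamma_t)$, with only one relation between them; the only extra information available is $t^{p-1}\Hn(\Gamma_t)\le m(t)\le \Hn(\Gamma_t)$, and nothing prevents a competitor whose level sets carry a large portion of boundary on $\partial\Omega_0$ with trace values barely above $t$, which makes the left-hand side $\bigl(n\omega_n^{1/n}\mu^{(n-1)/n}-\Hn(\Gamma_t)\bigr)_+$ collapse while the budget $E-\beta m(t)$ stays large, so integrating over all levels gives no contradiction with a smaller energy. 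This is exactly the obstruction the $H$-function machinery is built to bypass: the paper never integrates a differential inequality in $t$, but shows (\autoref{lemma2}) that for \emph{every} bounded $\varphi$ there is a \emph{single} level $t$ with $H(t,\varphi)\le E_{\beta,p}(K,\Omega)$, then inserts the dearranged test function $\varphi(x)=\frac{\abs{\nabla u^*}}{u^*}(r(u(x)))$, and uses the pointwise bound \eqref{eq: ipotesisuphi}, $\frac{\abs{\nabla u^*}}{u^*}\le\beta^{1/(p-1)}$, to replace the external boundary term $\beta\Hn(\partial_e U_t)$ by $\int_{\partial_e U_t}\varphi^{p-1}\,d\Hn$, after which the full isoperimetric inequality applies (\autoref{teorema part1}). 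Relatedly, you misidentify the role of the hypothesis on $\beta$: the radial profile is positive and monotone for every $\beta>0$; the threshold $\beta^{1/(p-1)}>\frac{n-p}{p-1}$ enters via \autoref{lemma5} as the condition equivalent to \eqref{eq: ipotesisuphi}, i.e. to the monotonicity of $\rho\mapsto E_{\beta,p}(B_1,B_\rho)$ past the comparison radius. In particular, in the regime $\frac{n-p}{p-1}<\beta^{1/(p-1)}<\frac{n-1}{p-1}$ with $\omega_nR^n=M<\omega_nR_{\beta,p}^n$, the comparison must be made against $B_{R_{\beta,p}}$, whose volume \emph{exceeds} $M$, in order to conclude that the optimal pair is $(B_1,B_1)$; your boundary matching ``$\mu^*(0^+)=\omega_nr^n=M$ or $r=1$'' does not produce this case and would wrongly suggest comparison with the non-optimal ball of volume $M$.

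Two further points. First, the existence step you put at the front is both unnecessary and itself a hard problem: the theorem is a comparison inequality valid for every admissible pair, and the rigidity statement is conditional on a minimizer being given, so no direct method (with its delicate lower semicontinuity of the Robin boundary term under set convergence) is needed; do not make the proof depend on it. Second, in the rigidity part, knowing that a.e. superlevel set $U_t$ is a ball does \emph{not} imply the balls are concentric (nested balls need not share a center); the paper needs a separate argument, showing that the center $x(t)$ is absolutely continuous and that $x'(t)\cdot\nu=0$ via \eqref{eq: derlevelvalue} and \eqref{eq: r'}, to conclude concentricity. Your ``hence $\{u>0\}$ and $\{u=1\}$ are concentric balls'' skips this entirely.
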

\begin{oss}
In the case
\[
\beta^{\frac{1}{p-1}}\le \frac{n-p}{p-1},
\]
adapting the symmetrization techniques used in \cite{nahon}, it can be proved that a solution to problem \eqref{problema} is always given by the pair $(B_1,B_1)$.
\end{oss}
We point out that the proof of the theorem relies on the techniques involving the $H$-function introduced in \cite{bossel, daners}.\medskip

The case in which $\Omega$ is the Minkowski sum $\Omega=K+B_r(0)$, the energy $E_{\beta,p}(K,\Omega)$, has been studied in \cite{Rossella} under suitable geometrical constraints.
\section{Proof of the theorem}
In order to prove \autoref{teorema}, we start by studying the function 
\[R\mapsto E_{\beta,p}(B_1,B_R).\]
A similar study of the previous function can also be found in \cite{Rossella}. Let 
\[
\Phi_{p,n}(\rho)=\begin{cases} \log(\rho) &\text{if } p=n,\\[7 pt] -\dfrac{p-1}{n-p}\dfrac{1}{\rho^\frac{n-p}{p-1}} &\text{if }p\ne n. \end{cases}\]
For every $R>1$, consider
\begin{equation}
\label{eq: ustar}
u^*(x)=1-\dfrac{\beta^{\frac{1}{p-1}}\left(\Phi_{p,n}(\abs{x})-\Phi_{p,n}(1)\right)_+}{\Phi'_{p,n}(R)+\beta^\frac{1}{p-1}\left(\Phi_{p,n}(R)-\Phi_{p,n}(1)\right)},
\end{equation}
the solution to 
\[\begin{cases}u^*=1 &\text{in }B_1,\\[5 pt]
\Delta_p u^*=0 &\text{in }B_R\setminus B_1,\\[6 pt]
\abs{\nabla u^*}^{p-2}\dfrac{\partial u^*}{\partial \nu}+\beta\abs{u^*}^{p-2}u^*=0 &\text{on }\partial B_R.\end{cases}\]
We have that 
\begin{equation}
\label{eq: Ebetarho}
\begin{split}E_{\beta,p}(B_1,B_R)&=\int_{B_R} \abs{\nabla u^*}^p\,dx+\beta\int_{\partial B_R} \abs{u^*}^p\,d\Hn\\[10 pt]&=\dfrac{n\omega_n\beta}{\left[\Phi'_{p,n}(R)+\beta^\frac{1}{p-1}\left(\Phi_{p,n}(R)-\Phi_{p,n}(1)\right)\right]^{p-1}}.\end{split}
\end{equation}
Notice that $E_{\beta,p}(B_1,B_R)$ is decreasing in $R>0$ if and only if
\[\dfrac{d }{d R} \left(\Phi'_{p,n}(R)+\beta^{\frac{1}{p-1}}\Phi_{p,n}(R)\right)\ge 0\]
that is, if and only if
\[R\ge \dfrac{n-1}{p-1}\dfrac{1}{\beta^\frac{1}{p-1}}=:\alpha_{\beta,p}.\]
Moreover 
\[\begin{split}&E_{\beta,p}(B_1,B_1)=n\omega_n \beta,\\[10 pt]
\lim_{R\to\infty}&E_{\beta,p}(B_1,B_R)=\begin{cases} n\omega_n \left(\dfrac{n-p}{p-1}\right)^{p-1} &\text{if }p<n,\\[10 pt]
0 &\text{if }p\ge n.
\end{cases}\end{split}\]
Therefore, there are three cases:
\begin{itemize}
    \item {if \[\beta^{\frac{1}{p-1}}\ge \dfrac{n-1}{p-1},\] $R\in[1,+\infty)\mapsto E_{\beta,p}(B_1,B_R)$ is decreasing; }
    \item {if \[\dfrac{n-p}{p-1}<\beta^{\frac{1}{p-1}}<\dfrac{n-1}{p-1},\] $R\in[1,+\infty)\mapsto E_{\beta,p}(B_1,B_R)$ increases on $[1,\alpha_{\beta,p}]$ and decreases on $[\alpha_{\beta,p},+\infty)$, with the existence of a unique $R_{\beta,p}>\alpha_{\beta,p}$ such that $E_{\beta,p}(B_1,B_{R_{\beta,p}})=E_{\beta,p}(B_1,B_1)$; }
    \item {if \[\beta^{\frac{1}{p-1}}\le \dfrac{n-p}{p-1},\] $R\in[1,+\infty)\mapsto E_{\beta,p}(B_1,B_R)$ reaches its minimum at $R=1$. }
\end{itemize}
See for instance \autoref{fig}, where
\[
\beta_1=\left(\frac{n-p}{p-1}\right)^{p-1}, \qquad \beta_2=\left(\frac{n-1}{p-1}\right)^{p-1}, \qquad p=2.5, \qquad n=3.
\]
\begin{figure}
\begin{tikzpicture}
    \node[anchor=south west,inner sep=0] (image) at (0,0) {\includegraphics[width=.50\linewidth]{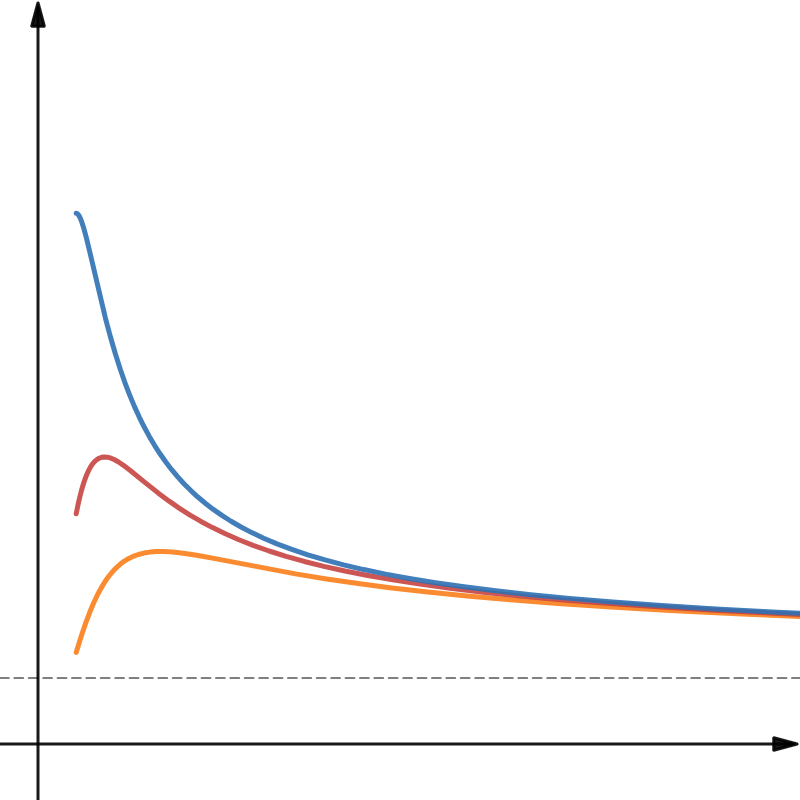}};
    \begin{scope}[x={(image.south east)},y={(image.north west)}]
        \node[anchor=west] at (0.05,0.95) {$E_{\beta,p}(B_1,B_r)$};
        \node[anchor=north] at (0.95,0.05) {$r$};
        \node[anchor=east] at (0,0.55) {\textcolor{myblue}{$\beta\ge\beta_2$}};
        \node[anchor=east] at (0,0.4) {\textcolor{myred}{$\beta_1<\beta<\beta_2$}};
        \node[anchor=east] at (0,0.25) {\textcolor{myyellow}{$\beta\le\beta_1$}};
    \end{scope}
\end{tikzpicture}
\caption{$E_{\beta,p}(B_1,B_r)$ depending on the value of $\beta$}
\label{fig}
\end{figure}

In the following, we will need
\begin{lemma}\label{lemma5}
Let $R>1$,$\beta>0$ and let $u^*$ be the solution of the problem on $(B_1,B_R)$. Then 
\[\dfrac{\abs{\nabla u^*}}{u^*}\le \beta^{\frac{1}{p-1}}\] 
in $B_R\setminus B_1$, if and only if
\[E_{\beta,p}(B_1,B_\rho)\ge E_{\beta,p}(B_1,B_R)\]
for every $\rho\in[1,R]$.
\end{lemma}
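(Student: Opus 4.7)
The plan is to show that both inequalities in the statement reduce, after explicit computation, to the same monotonicity condition on the auxiliary function
\[
g(\rho) := \Phi'_{p,n}(\rho) + \beta^{\frac{1}{p-1}}\Phi_{p,n}(\rho),
\]
namely $g(\rho)\le g(R)$ for every $\rho\in[1,R]$.

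First I would write down $u^*$ and $|\nabla u^*|$ explicitly at radius $\rho\in[1,R]$, using \eqref{eq: ustar} and the fact that $\Phi'_{p,n}(\rho)=\rho^{-\frac{n-1}{p-1}}$. A direct computation gives
\[
\frac{|\nabla u^*|}{u^*}(x) = \frac{\beta^{\frac{1}{p-1}}\Phi'_{p,n}(\rho)}{\Phi'_{p,n}(R)+\beta^{\frac{1}{p-1}}\bigl(\Phi_{p,n}(R)-\Phi_{p,n}(\rho)\bigr)}, \qquad |x|=\rho.
\]
Since the denominator is positive for $\rho\in[1,R]$, the inequality $|\nabla u^*|/u^*\le \beta^{\frac{1}{p-1}}$ at $|x|=\rho$ is equivalent, after clearing denominators, to
\[
\Phi'_{p,n}(\rho)+\beta^{\frac{1}{p-1}}\Phi_{p,n}(\rho)\le \Phi'_{p,n}(R)+\beta^{\frac{1}{p-1}}\Phi_{p,n}(R),
\]
i.e.\ to $g(\rho)\le g(R)$.

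Next, I would use the explicit expression \eqref{eq: Ebetarho} for the energy. Since the map $t\mapsto n\omega_n\beta/t^{p-1}$ is strictly decreasing on $(0,+\infty)$ and both $\Phi'_{p,n}(\rho)+\beta^{\frac{1}{p-1}}(\Phi_{p,n}(\rho)-\Phi_{p,n}(1))$ and the same expression with $R$ in place of $\rho$ are positive for $\rho\ge 1$, the inequality $E_{\beta,p}(B_1,B_\rho)\ge E_{\beta,p}(B_1,B_R)$ is equivalent to
\[
\Phi'_{p,n}(\rho)+\beta^{\frac{1}{p-1}}\bigl(\Phi_{p,n}(\rho)-\Phi_{p,n}(1)\bigr)\le \Phi'_{p,n}(R)+\beta^{\frac{1}{p-1}}\bigl(\Phi_{p,n}(R)-\Phi_{p,n}(1)\bigr),
\]
and the terms $\beta^{\frac{1}{p-1}}\Phi_{p,n}(1)$ cancel on both sides, leaving exactly $g(\rho)\le g(R)$.

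Combining the two reductions, the pointwise bound $|\nabla u^*|/u^*\le \beta^{\frac{1}{p-1}}$ at every radius $\rho\in[1,R]$ and the energy monotonicity $E_{\beta,p}(B_1,B_\rho)\ge E_{\beta,p}(B_1,B_R)$ for every $\rho\in[1,R]$ are both equivalent to $g(\rho)\le g(R)$ on $[1,R]$, hence to each other. There is no real obstacle here: the argument is essentially an unwinding of the explicit formulas, and the only care needed is to check positivity of the denominators in order to manipulate the inequalities; the positivity of $g$-related terms for $\rho\ge 1$ follows immediately because $\Phi_{p,n}$ is increasing and $\Phi'_{p,n}>0$.
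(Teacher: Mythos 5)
Your proposal is correct and follows essentially the same route as the paper: both reduce the pointwise bound $\abs{\nabla u^*}/u^*\le\beta^{1/(p-1)}$ and the energy comparison, via the explicit formulas \eqref{eq: ustar} and \eqref{eq: Ebetarho}, to the single monotonicity condition $\Phi'_{p,n}(\rho)+\beta^{1/(p-1)}\Phi_{p,n}(\rho)\le \Phi'_{p,n}(R)+\beta^{1/(p-1)}\Phi_{p,n}(R)$ for $\rho\in[1,R]$. You merely spell out the "straightforward computations" (the explicit ratio and the positivity of the denominators) that the paper leaves implicit.
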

\begin{proof}
Recalling the expressions of $u^*$ in \eqref{eq: ustar}, by straightforward computations we have that
\[\dfrac{\abs{\nabla u^*}}{u^*}\le \beta^{\frac{1}{p-1}}\] 
in $B_R\setminus B_1$ if and only if
\begin{equation}
\label{eq: monotonicity}
\Phi'_{p,n}(R)+\beta^\frac{1}{p-1}\left(\Phi_{p,n}(R)-\Phi_{p,n}(1)\right)\ge \Phi'_{p,n}(\rho)+\beta^\frac{1}{p-1}\left(\Phi_{p,n}(\rho)-\Phi_{p,n}(1)\right)
\end{equation}
for every $\rho\in[1,R]$, using the expression of $E_{\beta,p}(B_1,B_\rho)$ in \eqref{eq: Ebetarho}, \eqref{eq: monotonicity} is equivalent to
\[E_{\beta,p}(B_1,B_\rho)\ge E_{\beta,p}(B_1,B_R)\]
for every $\rho\in[1,R]$.
\end{proof}

\begin{defi}
Let $\Omega\subseteq\R^n$ be an open set, and let $U\subseteq\Omega$ be another set. We define the \emph{internal boundary} of $U$ as 
\[
\partial_i U=\partial U\cap \Omega,
\]
and the \emph{external boundary} of $U$ as
\[
\partial_e U= \partial U\cap\partial\Omega.
\]
\end{defi}
Let $K\subseteq\overline{\Omega}\subseteq\R^n$ be open bounded sets, and let $u$ be the minimizer of $E_{\beta.p}(K,\Omega)$. In the following, we denote by
\[
U_t=\Set{x\in\Omega | u(x)>t}.
\]
\begin{defi}[$H$-function]
\label{defi: H}
Let $\varphi\in W^{1,p}(\Omega)$. We define
\[
H(t,\varphi)=\int_{\partial_i U_t}\abs{\varphi}^{p-1}\,d\Hn - (p-1)\int_{U_t}\abs{\varphi}^p\,d\Ln+\beta\Hn(\partial_e U_t).
\]
\end{defi}
Notice that this definition is slightly different from the one given in \cite{bucdan}.
\begin{lemma}
\label{lemma1}
Let $K\subseteq\Omega\subseteq\R^n$ be an open, bounded sets, and let $u$ be the minimizer of $E_{\beta,p}(K,\Omega)$. Then for a.e. $t\in(0,1)$ we have
\[
H\left(t,\test\right)=E_{\beta,p}(K,\Omega).
\]
\begin{proof}
Recall that
\begin{equation}
    \label{eq: Hfunction1}
E_{\beta,p}(K,\Omega)=\int_{\Omega}\abs{\nabla u}^p\,d\Ln + \beta\int_{\partial\Omega}u^p=\beta\int_{\partial\Omega}u^{p-1}\,d\Hn.
\end{equation}
Let $t\in (0,1)$, we construct the following test functions: let $\varepsilon>0$, and let
\[
\varphi_\eps(x)=\begin{dcases}
-1 &\text{if }u(x)\le t, \\
\frac{u(x)-t}{\eps u(x)^{p-1}}-1 &\text{if }t<u(x)\le t+\eps, \\
\frac{1}{u(x)^{p-1}}-1 &\text{if } u(x)>t+\eps,
\end{dcases}
\]
so that $\varphi_\eps$ is an approximation the function $(u^{1-p}\chi_{U_t}-1)$, and
\[
\nabla \varphi_\eps(x)=\begin{dcases}
0 &\text{if }u(x)\le t, \\
\frac{1}{\eps}\left(\frac{\nabla u(x)}{u(x)^{p-1}}-(p-1)\frac{\nabla u(x) (u(x)-t)}{u(x)^p}\right) &\text{if }t<u(x)\le t+\eps, \\
-(p-1)\frac{\nabla u(x)}{u(x)^{p}} &\text{if } u(x)>t+\eps.
\end{dcases}
\]
We have that $\varphi_\eps$ is an admissible test function for the Euler-Lagrange equation \eqref{eq: EL}, which entails
\[
\begin{split}
0=&\frac{1}{\eps}\int_{\{t<u\le t+\eps\}\cap \Omega}\!\frac{\abs{\nabla u}^{p-1}}{u^{p-1}}\abs{\nabla u}\, d\Ln-(p-1)\int_{\{t<u\le t+\eps\}\cap \Omega}\!\frac{\abs{\nabla u}^{p}}{u^{p}}\frac{u-t}{\eps}\, d\Ln \\[7 pt]
&-(p-1)\int_{\{u>t+\eps\}\cap\Omega}\frac{\abs{\nabla u}^{p}}{u^{p}}\,d\Ln+\beta\int_{\{t<u\le t+\eps\}\cap \partial\Omega}\frac{u-t}{\eps}\,d\Hn \\[7 pt]
&+\beta\Hn\left(\partial\Omega\cap \{u>t+\eps\}\right)-\beta\int_{\partial \Omega}u^{p-1}\,d\Hn.
\end{split}
\]
Letting now $\eps$ go to 0, by coarea formula we get that for a.e. $t\in (0,1)$
\begin{equation}
\label{eq: Hfunction2}
\begin{split}
\beta\int_{\partial \Omega}u^{p-1}\,d\Hn=&\int_{\partial_i U_t}\left(\frac{\abs{\nabla u}}{u}\right)^{p-1}\,d\Hn-(p-1)\int_{U_t}\left(\frac{\abs{\nabla u}}{u}\right)^{p}\,d\Ln\\[7 pt]
&+\beta \Hn(\partial_e U_t).
\end{split}
\end{equation}
Joining \eqref{eq: Hfunction1} and \eqref{eq: Hfunction2}, the lemma is proven.
\end{proof}
\end{lemma}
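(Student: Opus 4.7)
The plan is to plug a carefully chosen test function into the weak Euler--Lagrange equation \eqref{eq: EL}, and to combine the resulting identity with the observation, already noted after \eqref{eq: EL}, that
\[
E_{\beta,p}(K,\Omega)=\beta\int_{\partial\Omega}u^{p-1}\,d\Hn.
\]
It therefore suffices to prove, for a.e.\ $t\in(0,1)$, the relation
\[
\int_{\partial_i U_t}\left(\test\right)^{p-1}d\Hn-(p-1)\int_{U_t}\left(\test\right)^p d\Ln+\beta\Hn(\partial_e U_t)=\beta\int_{\partial\Omega}u^{p-1}\,d\Hn.
\]

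The natural formal choice is $\varphi=u^{1-p}\chi_{U_t}-1$. Since $u\equiv 1$ on $K$ and $t<1$, this $\varphi$ vanishes on $K$, and the constant $-1$ is precisely what reconstructs the right-hand side $-\beta\int_{\partial\Omega}u^{p-1}\,d\Hn$ via the boundary term in \eqref{eq: EL}. To make $\varphi$ admissible in $W^{1,p}(\Omega)$, I would regularize $\chi_{U_t}$ by the Lipschitz cutoff $\psi_\eps$ equal to $1$ on $\{u>t+\eps\}$, to $0$ on $\{u\le t\}$, and linear in $u$ on the strip $\{t<u\le t+\eps\}$, and then test \eqref{eq: EL} against the approximation $\varphi_\eps=u^{1-p}\psi_\eps-1$.

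Computing $\nabla\varphi_\eps$ region by region and sending $\eps\to 0^+$, the contribution from $\{u>t+\eps\}$ (coming from differentiating $u^{1-p}$) converges to $-(p-1)\int_{U_t}\abs{\nabla u}^p/u^p\,d\Ln$. The dominant strip term has the form $\eps^{-1}\int_{\{t<u\le t+\eps\}}\abs{\nabla u}^p/u^{p-1}\,d\Ln$ and, by the coarea formula, converges for a.e.\ $t$ to $\int_{\partial_i U_t}(\abs{\nabla u}/u)^{p-1}\,d\Hn$; the remaining strip term carries the factor $(u-t)/\eps\le 1$ on a set of vanishing Lebesgue measure and therefore disappears. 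The $\beta$-boundary part splits analogously into $\beta\Hn(\partial_e U_t)+o(1)$ plus the constant $-\beta\int_{\partial\Omega}u^{p-1}\,d\Hn$, and a final rearrangement yields the claimed identity.

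The main technical obstacle is the coarea passage to the limit, namely justifying, for almost every $t$, both
\[
\lim_{\eps\to 0^+}\frac{1}{\eps}\int_{\{t<u\le t+\eps\}\cap\Omega}\!\!f\,d\Ln=\int_{\{u=t\}\cap\Omega}\frac{f}{\abs{\nabla u}}\,d\Hn
\]
with $f=\abs{\nabla u}^p/u^{p-1}$, and the auxiliary fact $\Hn(\partial\Omega\cap\{u=t\})=0$ needed to kill the boundary strip. Both are standard coarea and $\Hn$-measure statements applied to $u\in W^{1,p}(\Omega)$ (the latter using that $\{u=t\}\cap\partial\Omega$ cannot have positive $\Hn$-measure for uncountably many values of $t$); once they are in place, the identification of the limit with $H(t,\abs{\nabla u}/u)$ is immediate.
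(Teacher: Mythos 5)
Your proposal is correct and follows essentially the same route as the paper: the regularized test function $u^{1-p}\psi_\eps-1$, with $\psi_\eps$ linear in $u$ on the strip $\{t<u\le t+\eps\}$, is exactly the paper's $\varphi_\eps$, and the subsequent insertion into \eqref{eq: EL} and the coarea passage to the limit coincide with the paper's argument. The only difference is cosmetic: you make explicit the auxiliary facts (a.e.\ validity of the coarea limit and $\Hn(\partial\Omega\cap\{u=t\})=0$ for a.e.\ $t$) that the paper leaves implicit in the phrase ``letting $\eps$ go to $0$, by coarea formula.''
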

\begin{oss}
Notice that if $K,$ and $\Omega$ are two concentric balls, the minimizer $u$ is the one written in \eqref{eq: ustar}, for which the statement of the above Lemma holds for every $t\in(0,1)$.
\end{oss}
\begin{lemma}
\label{lemma2}
Let $\varphi\in L^\infty(\Omega)$. Then there exists $t\in(0,1)$ such that 
\[
H(t,\varphi)\le E_{\beta, p}(K,\Omega).
\]
\begin{proof}
Let 
\[
w=\abs{\varphi}^{p-1}-\left(\test\right)^{p-1},
\]
then we evaluate
\[
\begin{split}
H(t,\varphi)-H\left(t,\test\right)\!&= \int_{\partial_i U_t}\!\!w\,d\Hn -(p-1)\!\int_{U_t}\!\left(\abs{\varphi}^p-\left(\test\right)^p\right)\,d\Ln\\[7 pt]
&\le \int_{\partial_i U_t} w\,d\Hn-p\int_{U_t}\test w\,d\Ln\\[7 pt]&=-\frac{1}{t^{p-1}}\frac{d}{dt}\left(t^p\int_{U_t}\test w\,d\Ln \right),
\end{split}
\]
where we used the inequality
\begin{equation}
\label{eq: convexity}
a^p-b^p\le \frac{p}{p-1}a\,(a^{p-1}-b^{p-1}) \qquad \forall a,b\ge0.
\end{equation}
Multiplying by $t^{p-1}$ and integrating, we get
\begin{equation}
\label{eq: Hineq}
\int_0^1 t^{p-1}\left(H(t,\varphi)-H\left(t,\test\right)\right)\,dt\le-\left[t^p\int_{U_t}\test w\, d\Ln\right]_0^1=0,
\end{equation}
from which we obtain the conclusion of the proof.
\end{proof}
\end{lemma}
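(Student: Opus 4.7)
The plan is to lean on Lemma~\ref{lemma1}, which pins $H(t,\test)$ to the value $E_{\beta,p}(K,\Omega)$ for almost every $t\in(0,1)$. So to locate a level $t$ with $H(t,\varphi)\le E_{\beta,p}(K,\Omega)$ it suffices to show that the weighted mean
\[
\int_0^1 t^{p-1}\bigl(H(t,\varphi)-H(t,\test)\bigr)\,dt
\]
is non-positive: combined with Lemma~\ref{lemma1}, this will yield $\int_0^1 t^{p-1}H(t,\varphi)\,dt\le \frac{1}{p}E_{\beta,p}(K,\Omega)$, from which the conclusion follows by averaging. The task then reduces to making the integrand into something whose $t$-integral is easily controlled.

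Set $w=\abs{\varphi}^{p-1}-(\test)^{p-1}$. In the difference $H(t,\varphi)-H(t,\test)$ the exterior boundary contribution $\beta\Hn(\partial_e U_t)$ cancels, while the interior surface terms combine into $\int_{\partial_i U_t}w\,d\Hn$. To handle the volume difference I would apply the elementary convexity bound
\[
a^p-b^p\le \tfrac{p}{p-1}\,a\,(a^{p-1}-b^{p-1}),\qquad a,b\ge 0,
\]
with $a=\test$ and $b=\abs{\varphi}$ --- the opposite ordering would not produce the useful weighting. This yields
\[
H(t,\varphi)-H\!\left(t,\test\right)\le \int_{\partial_i U_t} w\,d\Hn - p\int_{U_t}\test\,w\,d\Ln.
\]

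The decisive observation is that the right-hand side equals $-t^{1-p}F'(t)$, where $F(t)=t^p\int_{U_t}\test\,w\,d\Ln$. Indeed, coarea gives $\tfrac{d}{dt}\int_{U_t}\test\,w\,d\Ln=-t^{-1}\int_{\partial_i U_t}w\,d\Hn$, since $u\equiv t$ on $\partial_i U_t$. Multiplying the pointwise bound by $t^{p-1}$ and integrating on $(0,1)$ then gives
\[
\int_0^1 t^{p-1}\bigl(H(t,\varphi)-H(t,\test)\bigr)\,dt\le -\bigl[F(t)\bigr]_0^1=0,
\]
as $F(1)=0$ because $U_1=\emptyset$ and $F(0)=0$ by the $t^p$ prefactor (using $\varphi\in L^\infty$).

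The main obstacle I anticipate is precisely selecting the right weight $t^{p-1}$ and the right ordering $a=\test$, $b=\abs{\varphi}$ in the convexity inequality so that the bound meshes exactly into a total derivative; once this is spotted, the rest reduces to coarea and boundary-value bookkeeping, with a mild verification that $t^p\int_{U_t}\test\,w\,d\Ln\to 0$ as $t\to 0^+$.
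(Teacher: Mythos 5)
Your proposal is correct and follows essentially the same route as the paper: the same auxiliary function $w=\abs{\varphi}^{p-1}-\left(\test\right)^{p-1}$, the same convexity inequality \eqref{eq: convexity} with $a=\test$, $b=\abs{\varphi}$, the same identification of the resulting bound as $-t^{1-p}\frac{d}{dt}\bigl(t^p\int_{U_t}\test w\,d\Ln\bigr)$, and the same integration against the weight $t^{p-1}$ followed by an averaging argument via \autoref{lemma1}. Your remark about checking $t^p\int_{U_t}\test w\,d\Ln\to 0$ as $t\to 0^+$ is a sensible (and slightly more careful) addition to a step the paper leaves implicit.
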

\begin{oss}
\label{rem: unicità}
    Notice that the inequality \eqref{eq: convexity} holds as equality if and only if $a=b$. Therefore, if $\varphi\ne\frac{\abs{\nabla u}}{u}$ on a set of positive measure, then the inequality in \eqref{eq: Hineq} is strict, since 
    \[
    \abs*{\Set{\varphi\ne \frac{\nabla u}{u}}\cap U_t}>0
    \]
    for small enough $t$. Therefore, there exists $S\subset(0,1)$ such that $\mathcal{L}^1(S)>0$ and for every $t\in S$ 
    \[
    H(t,\varphi)<E_{\beta,p}(K,\Omega).
    \]
\end{oss}
    In the following, we fix a radius $R$ such that $\abs{B_R}\ge\abs{\Omega}$, $u^*$ the minimizer of $E_{\beta,p}(B_1,B_R)$, and
\[
\begin{split}
H^*(t,\varphi)=&\int_{\partial\set{u^*>t}\cap B_R}\abs{\varphi}^{p-1}\,d\Hn - (p-1)\int_{\set{u^*>t}}\abs{\varphi}^p\,d\Ln\\[7 pt]
&+\beta\Hn(\partial\set{u^*<t}\cap\partial B_R).
\end{split}
\]
\begin{prop}
\label{teorema part1}
Let $\beta>0$. Assume that 
\begin{equation}
    \label{eq: ipotesisuphi}
\testt\le \beta^{\frac{1}{p-1}}.
\end{equation}
Then we have that
\[
E_{\beta,p}(K,\Omega)\ge E_{\beta,p}(B_1,B_R).
\]
\end{prop}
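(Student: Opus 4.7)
The plan is to apply the $H$-function machinery of \autoref{lemma1} and \autoref{lemma2} with a carefully designed test function $\varphi\in L^\infty(\Omega)$. If $\varphi$ can be chosen so that $H(t,\varphi)\ge E_{\beta,p}(B_1,B_R)$ for every $t\in(0,1)$, then \autoref{lemma2} supplies $t_0\in(0,1)$ with $E_{\beta,p}(K,\Omega)\ge H(t_0,\varphi)\ge E_{\beta,p}(B_1,B_R)$, which is exactly the claim. The natural choice of $\varphi$ is the radial transport of $\testt$: set
\[
g(\rho):=\frac{\abs{\nabla u^*}(\rho)}{u^*(\rho)}\quad\text{for } \rho\in[1,R],\qquad \rho_t:=\left(\frac{\abs{U_t}}{\omega_n}\right)^{1/n},\qquad \varphi(x):=g(\rho_{u(x)}).
\]
Because $\omega_n=\abs{K}\le\abs{U_t}\le\abs{\Omega}\le\abs{B_R}$, one has $\rho_t\in[1,R]$, so $\varphi$ is well defined and bounded; moreover \eqref{eq: ipotesisuphi} gives $\varphi\le\beta^{1/(p-1)}$ pointwise.

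Next I would expand $H(t,\varphi)$. On $\partial_i U_t$, $u\equiv t$, so $\varphi\equiv g(\rho_t)$ and the boundary integrand equals $g(\rho_t)^{p-1}$. Since $\varphi$ is a function of $u$, the layer-cake formula together with the successive changes of variable $m=\abs{U_s}$ and $m=\omega_n\rho^n$ gives
\[
\int_{U_t}\abs{\varphi}^p\,d\Ln = n\omega_n\int_1^{\rho_t}g(\rho)^p\rho^{n-1}\,d\rho,
\]
so that
\[
H(t,\varphi)=g(\rho_t)^{p-1}\Hn(\partial_i U_t)+\beta\Hn(\partial_e U_t)-(p-1)n\omega_n\int_1^{\rho_t}g(\rho)^p\rho^{n-1}\,d\rho.
\]

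The hypothesis \eqref{eq: ipotesisuphi} is used at exactly one point: $g(\rho_t)^{p-1}\le\beta$, so the boundary contribution is bounded below by $g(\rho_t)^{p-1}\Hn(\partial U_t)$, and the classical isoperimetric inequality then yields $\Hn(\partial U_t)\ge n\omega_n\rho_t^{n-1}$. Hence
\[
H(t,\varphi)\ge F(\rho_t),\qquad F(\sigma):=g(\sigma)^{p-1}n\omega_n\sigma^{n-1}-(p-1)n\omega_n\int_1^{\sigma}g(\rho)^p\rho^{n-1}\,d\rho.
\]
By \autoref{lemma1} and the remark following it applied to $u^*$ on $(B_1,B_R)$, for every $s\in(u^*(R),1)$ the superlevel set $\{u^*>s\}$ equals the open ball $B_{\rho^*(s)}$ with empty external boundary, whence $H^*(s,g)=F(\rho^*(s))=E_{\beta,p}(B_1,B_R)$. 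Since $\rho^*$ sweeps $(1,R)$ continuously, $F\equiv E_{\beta,p}(B_1,B_R)$ on $[1,R]$ by continuity, and the desired inequality $H(t,\varphi)\ge E_{\beta,p}(B_1,B_R)$ follows for every admissible $t$.

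The main obstacle is the design of $\varphi$: it must be rigged so that the $H$-function splits into a radial boundary term $g(\rho_t)^{p-1}n\omega_n\rho_t^{n-1}$ and a radial volume integral $\int_1^{\rho_t}g^p\rho^{n-1}d\rho$, both governed by the same function $g$, so that the identification via \autoref{lemma1} is possible. Once this matching is in place, the hypothesis $g\le\beta^{1/(p-1)}$ does precisely the work needed to absorb $\beta\Hn(\partial_e U_t)$ into $g(\rho_t)^{p-1}\Hn(\partial U_t)$ at a common coefficient, after which the isoperimetric inequality and the identity $F\equiv E_{\beta,p}(B_1,B_R)$ close the argument.
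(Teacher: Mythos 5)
Your proposal is correct and follows essentially the same route as the paper: the same test function $\varphi(x)=\frac{\abs{\nabla u^*}}{u^*}\bigl(r(u(x))\bigr)$ fed into \autoref{lemma1} and \autoref{lemma2}, the same equi-measurability identity for the volume term, the hypothesis \eqref{eq: ipotesisuphi} used once to absorb $\beta\Hn(\partial_e U_t)$, and the isoperimetric inequality to compare with the ball. The only differences are cosmetic: you derive the equi-measurability by layer-cake/change of variables where the paper uses the coarea formula and \eqref{eq: r'}, and you identify the lower bound with $E_{\beta,p}(B_1,B_R)$ via continuity of $F$ on $[1,R]$ instead of writing it directly as $H^*\left(u^*(r(t)),\testt\right)$.
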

\begin{proof}
In the following, if $v$ is a radial function on $B_R$ and $r\in(0,R)$, we denote with abuse of notation
\[
v(r)=v(x),
\]
where $x$ is any point on $\partial B_r$. By \autoref{lemma1} we know that for every $t\in(0,1)$
\begin{equation}
\label{eq: theqball}
H^*\left(t,\testt\right)=E_{\beta,p}(B_1,B_R),
\end{equation}
while by \autoref{lemma2}, for every $\varphi\in L^{\infty}(\Omega)$ there exists a $t\in(0,1)$ such that
\begin{equation}
\label{eq: thineqgen}
E_{\beta,p}(K,\Omega)\ge H(t,\varphi).
\end{equation}
We aim to find a suitable $\varphi$ such that, for some $t$,
\begin{equation}
\label{eq: thineqgenball}
H(t,\varphi)\ge H^*\left(t,\testt\right),
\end{equation}
so that combining \eqref{eq: thineqgen}, \eqref{eq: thineqgenball}, and \eqref{eq: theqball} we conclude the proof. In order to construct $\varphi$, for every $t\in(0,1)$ we define
\begin{equation}
\label{eq: radius}
r(t)=\left(\frac{\abs{U_t}}{\omega_n}\right)^{\frac{1}{n}},
\end{equation}
then we set, for every $x\in\Omega$,
\[
\varphi(x)=\testt(r(u(x))).
\]
\marcomm{\textbf{Claim}} The functions $\varphi\chi_{U_t}$ and $\testt\chi_{B_{r(t)}}$ are equi-measurable, in particular
\begin{equation}
\label{eq: equi-meas}
    \int_{U_t}\varphi^p\,d\Ln=\int_{B_{r(t)}}\left(\testt\right)^{p}\,d\Ln.
\end{equation}
    Indeed, let $g(r)=\testt(r)$, and by
    coarea formula,
\begin{equation}
\label{eq: measlevset}
\begin{split}
\abs{{U_t\cap\set{\varphi>s}}}&=\!\int_{U_t\cap\Set{g(r(u(x)))>s}}\,d\Ln \\[7 pt]
&=\!\int_t^{+\infty}\int_{\partial^* U_{\tau}\cap\set{g(r(\tau))>s}}\frac{1}{\abs{\nabla u(x)}}\,d\Hn(x)\,d\tau \\[7 pt]
&=\!\int_0^{r(t)}\!\!\int_{\partial^* U_{r^{-1}(\sigma)}}\frac{1}{\abs{\nabla u(x)}\abs{r'(r^{-1}(\sigma))}}\chi_{\set{g(\sigma)>s}}\,d\Hn(x)\,d\sigma.
\end{split}
\end{equation}
Notice now that, since
\[
\omega_n r(\tau)^n=\abs{U_\tau},
\]
then
\begin{equation}
\label{eq: r'}
r'(\tau)=-\frac{1}{n\omega_n r(\tau)^{n-1}}\int_{\partial^* U_\tau}\frac{1}{\abs{\nabla u(x)}}\,d\Hn(x).
\end{equation}
Therefore, substituting in \eqref{eq: measlevset}, we get
\[
\abs{{U_t\cap\set{\varphi>s}}}=\int_0^{r(t)}n\omega_n \sigma^{n-1}\chi_{\Set{g(\sigma)>s}}\,d\sigma=\abs*{B_{r(t)}\cap\Set{\testt>s}};
\]
where we have used polar coordinates to get the last equality. Thus, the claim is proved.

Recalling the definition of $\varphi$, \eqref{eq: ipotesisuphi} reads
\[
\beta\ge \varphi^{p-1},
\]
then using \eqref{eq: equi-meas} and the definition of $H$ (see \autoref{defi: H}), we have
\begin{equation}
\label{eq: finalestimate}
\begin{split}
H(t,\varphi)&=\beta\Hn(\partial_e U_t)+\int_{\partial_i U_t}\varphi^{p-1}\,d\Hn-(p-1)\int_{U_t}\varphi^p\,d\Ln \\[7 pt]
&\ge \int_{\partial U_t}\varphi^{p-1}\,d\Hn-(p-1)\int_{B_{r(t)}}\left(\testt\right)^{p}\,d\Ln \\[7 pt]
&\ge \int_{\partial B_{r(t)}}\left(\testt\right)^{p-1}\,d\Hn -(p-1)\int_{B_{r(t)}}\left(\testt\right)^{p}\,d\Ln\\[7 pt]
&=H^*\left(u^*(r(t)),\testt\right)\\[7 pt]
&=E_{\beta,p}(B_1,B_R),
\end{split}
\end{equation}
where in the last inequality we have used the isoperimetric inequality and the fact that $\varphi$ is constant on $\partial U_t$.
\end{proof}
\begin{oss}
\label{rem: unicità2}
By \autoref{rem: unicità}, we have that if $K$ and $\Omega$ are such that
\[
E_{\beta,p}(K,\Omega)=E_{\beta, p}(B_1,B_R),
\]
then
\[
\varphi=\frac{\abs{\nabla u}}{u} \qquad \text{for a. e. }x\in\Omega,
\]
so that, by \autoref{lemma1}, we have equality in \eqref{eq: finalestimate} for a.e. $t\in(0,1)$. Thus, by the rigidity of the isoperimetric inequality, we get that $U_t$ coincides with a ball up to a $\Hn$-negligible set for a.e. $t\in(0,1)$. In particular, $\Set{u>0}=\bigcup_t U_t$ and $\set{u=1}=\bigcap_t U_t$ coincide with two balls up to a $\Hn$-negligible set.
\end{oss}
\begin{proof}[Proof of \autoref{teorema}]
Fix $M=\omega_n R^n$ with $R>1$. We divide the proof of the minimality of balls into two cases, and subsequently, we study the equality case. \medskip

Let us assume that
\[\beta^{\frac{1}{p-1}}\ge\dfrac{n-1}{p-1},\]
and recall that in this case the function
\[\rho\in[1,+\infty)\mapsto E_{\beta,p}(B_1,B_\rho)\]
is decreasing. Let $u^*$ be the minimizer of $E_{\beta,p}(B_1,B_R)$, by \autoref{lemma5} condition \eqref{eq: ipotesisuphi} holds and, by \autoref{teorema part1}, we have that a solution to \eqref{problema} is given by the concentric balls $(B_1,B_R)$.\medskip

Assume now that
\[\dfrac{n-p}{p-1}<\beta^{\frac{1}{p-1}}<\dfrac{n-1}{p-1},\]
then, in this case, letting \[\alpha_{\beta,p}=\dfrac{(n-1)}{(p-1)\beta^\frac{1}{p-1}},\] the function
\[\rho\in[1,+\infty)\mapsto E_{\beta,p}(B_1,B_\rho)\]
increases on $[1,\alpha_{\beta,p}]$ and decreases on $[\alpha_{\beta,p},+\infty)$, and there exist a unique $R_{\beta,p}>\alpha_{\beta,p}$ such that $E_{\beta,p}(B_1,B_{R_{\beta,p}})=E_{\beta,p}(B_1,B_1)$. If $R\ge R_{\beta,p}$ the function $u^*$, minimizer of $E_{\beta,p}(B_1,B_R)$, still satisfies condition \eqref{eq: ipotesisuphi} and, as in the previous case, a solution to \eqref{problema} is given by the concentric balls $(B_1,B_R)$. On the other hand, if $R<R_{\beta,p}$, we can consider $u^*_{\beta,p}$ the minimizer of $E_{\beta,p}(B_1,B_{R_{\beta,p}})$. By \autoref{lemma5} we have that, for the function $u^*_{\beta,p}$, condition \eqref{eq: ipotesisuphi} holds and, by \autoref{teorema part1}, we have that if $K$ and $\Omega$ are open bounded Lipschitz sets with $K\subseteq \Omega$, $\abs{K}=\omega_n$, and $\abs{\Omega}\le M$, then 
\[
E_{\beta,p}(K,\Omega)\ge E_{\beta,p}(B_1,B_{R_{\beta,p}})=E_{\beta,p}(B_1,B_1)
\]
and a solution to \eqref{problema} is given by the pair $(B_1,B_1)$. 

\medskip
For what concerns the equality case, we will follow the outline of the rigidity problem given in \cite[Section 3]{masiellopaoli} (see also \cite[Section 2]{arontalenti}). Let $K_0\subseteq\overline{\Omega_0}$ be such that 
\[
E_{\beta,p}(K_0,\Omega_0)=\min_{\substack{\abs{K}=\omega_n\\ \abs{\Omega}\le M}}E_{\beta,p}(K,\Omega),
\]
let $u$ be the minimizer of $E_{\beta,p}(K_0,\Omega_0)$. If $\mathring{K_0}=\Omega_0$, then $\abs{\Omega_0}=\abs{B_1}$  and isoperimetric inequality yields
\[\Hn(\partial\Omega_0)\ge\Hn(\partial B_1),\]
while, from the minimality of $(K_0,\Omega_0)$ we have that
\[E_{\beta,p}(K_0,\Omega_0)=\beta \Hn(\partial\Omega_0)\le E_{\beta,p}(B_1,B_1)= \beta\Hn(\partial B_1),\]
so that $\Hn(\Omega_0)=\Hn(\partial B_1)$. Hence, by the rigidity of the isoperimetric inequality we have that $\mathring{K_0}=\Omega_0$ are balls of radius $1$. On the other hand, if $\mathring{K_0}\ne\Omega_0$, from the first part of the proof, there exists $R_0>1$ such that $\abs{B_{R_0}}\ge M$ and
\[
E_{\beta,p}(K_0,\Omega_0)=E_{\beta,p}(B_1,B_{R_0}).
\] 
Therefore, by \autoref{rem: unicità2}, we have that for a.e. $t\in(0,1)$, the superlevel sets $U_t$ coincide with balls up to $\Hn$-negligible sets, and $\Set{u=1}$ and $\Set{u>0}$ coincide with balls, up to $\Hn$-negligible sets, as well. We only have to show that $\Set{u=1}$ and $\Set{u>0}$ are concentric balls. To this aim, let us denote by $x(t)$ the center of the ball $U_t$ and by $r(t)$ the radius of $U_t$, as already done in \eqref{eq: radius}. In addition, we also have that
\[
\frac{\abs{\nabla u^*}}{u^*}\Big(r\big(u(x)\big)\Big)=\varphi(x)=\frac{\abs{\nabla u}}{u}(x),
\]
so that, if $u(x)=t$, then $\abs{\nabla u(x)}=C_t>0$. This ensures that we can write
\[
\begin{split}
x(t)&=\frac{1}{\abs{U_t}}\int_{U_t}x\,d\Ln(x)\\[7 pt]
&=\frac{1}{\abs{U_t}}\left(\int_t^1\int_{\partial U_s}\frac{x}{\abs{\nabla u(x)}}\,d\Hn(x)\,ds+\int_K x\,d\Ln(x)\right),
\end{split}
\]
and we can infer that $x(t)$ is an absolutely continuous function, since $\abs{\nabla u}>0$ implies that $\abs{U_t}$ is an absolutely continuous function as well. Moreover, on $\partial U_t$ we have that for every $\nu\in\mathbb{S}^{n-1}$, 
\begin{equation}
\label{eq: levelvalue}
u(x(t)+r(t)\nu)=t,
\end{equation}
from which 
\begin{equation}
\label{eq: gradientlevel}
\nabla u(x(t)+r(t)\nu)=-C_t\nu.
\end{equation}
Differentiating \eqref{eq: levelvalue}, and using \eqref{eq: gradientlevel}, we obtain
\begin{equation}
\label{eq: derlevelvalue}
-C_t\, x'(t)\cdot \nu-C_t \,r'(t)=1.
\end{equation}
Finally, joining \eqref{eq: derlevelvalue} and \eqref{eq: r'}, and the fact that $\abs{\nabla u}=C_t$ on $\partial U_t$, we get
\[
x'(t)\cdot \nu = 0
\]
for every $\nu\in\mathbb{S}^{n-1}$, so that $x(t)$ is constant and $U_t$ are concentric balls for a.e. $t\in(0,1)$. In particular, $\Set{u=1}=\bigcap_t U_t$ and $\Set{u>0}=\bigcup_t U_t$ share the same center.
\end{proof}

\printbibliography[heading=bibintoc]
\Addresses
\end{document}